\newfont{\fnt}{cmsy10}
\newfont{\sss}{cmti10}
\theoremstyle{definition}
\newtheorem{df}{Definition}[section]
\theoremstyle{plain}
\newtheorem{thm}{Theorem}[section]
\newtheorem{prp}{Proposition}[section]
\theoremstyle{definition}
\newtheorem{rmk}{Remark}[section]
\begin{document}
\author{Barbora Volná}
\address{Mathematical Institute, Silesian University in Opava, \newline \indent Na Rybníčku 1, 746 01 Opava, Czech Republic}
\email{Barbora.Volna@math.slu.cz}
\keywords{IS-LM model, Relaxation Oscillations, Macroeconomic Stability}
\subjclass[2010]{91B50, 91B55}

\title{Relaxation Oscillations in New IS-LM Model}

\begin{abstract}
In this paper, we create new version of IS-LM model. The original IS-LM model has two main deficiencies: assumptions of constant price level and of strictly exogenous money supply. New IS-LM model eliminates these deficiencies.\\
\indent
In the second section, we prove the existence of relaxation oscillations in this new IS-LM model.
\end{abstract}

\maketitle

\section*{Introduction}
In this paper, we focused on IS-LM model which is one of the fundamental macroeconomic models explaining aggregate macroeconomic equilibrium, i. e. goods market equilibrium (represented by the curve IS) and money market or financial assets market equilibrium (represented by the curve LM) simultaneously. Many experts researched the problem of macroeconomic (un)stability and also the IS-LM model. Already in 1937, J. \nolinebreak R. \nolinebreak Hicks published the original IS-LM model, see \cite{hicks}, which is mathematical model of Mr. Keynes theory. Many versions of this model and related problems were presented in several works, see e. g. \cite{barakova, cesare-sportelli, chiba-leong, gandolfo, king, kodera, varian, zhou-li, zhou-li_2}. \\
\indent
So, the IS-LM model takes unquestionably some place in the macroeconomics area. The \nolinebreak original IS-LM model has many supporters and many opposers. This model is mainly criticized by opposers for missing modelling of inflation effect and for his conception of supply of money. So, the original IS-LM model has these two main deficiencies. The first one means an assumption of constant price level and the second one means an assumption of strictly exogenous money supply, i. e. supply of money is certain constant money stock determined by central bank. Today's economists can not find consensus in the problem of endogenity or exogenity of money supply, see e. g. \cite{izak}. Endogenous money supply means that money are generated in economics by credit creation, see e. g. \cite{sojka}. We created new version of IS-LM model which eliminates these deficiencies. We include modelling inflation and also some money supply function, which represents conjunction of endogenous and exogenous conception of money supply, to this model. \\
\indent
So, there is brief presentation of original IS-LM model with some standard economic properties of the IS-LM model functions and then there is definition of the new version of IS-LM model with another properties counting for example the sufficient condition for existence of aggregate macroeconomic equilibrium (the intersection point of the curve IS and the curve LM).\\
\indent
The second section of this paper aims at relaxation oscillations in this new IS-LM model. Many experts were modelling different type of oscillations in various branch of science (physics, ecology, biology, economy) for several centuries, see e. g. \cite{bogoliubov-mitropolsky, ding, guckenheimer-holmes, heer_maussner, kaldor, kolesov-kolesov, perko, puu, surhone_timpledon_marseken, zhang}. Today's situation and unexpected fluctuations in the economics confirm the necessity of modelling different types of oscillations occurring in the economics. We use new IS-LM model and show how can this special type of oscillation resembled limit cycle called relaxation oscillation can originate in this model. The relaxation oscillations are the fluctuations of the main variables caused by different velocity of the trajectories and by the (un)stability of the arcs of the relevant isoclines. Relaxation oscillations may explain the situation, when one unstable regime is located between two stable regimes.\\
\indent
We formulate the sufficient conditions for existence of relaxation oscillations in this new IS-LM model. These conditions are not only mathematical conditions, but also there exists the relevant economical theory of these conditions and relevant interpretation what these conditions mean in economics. There can originate the relaxation oscillations on IS side of the model, i. e. on goods market side, and also on LM side of the model, i. e. on money market or financial assets market side. For the modelling of relaxation oscillations on IS side, we presume that the subjects and their reactions on goods market are faster than on money market (or financial assets market). Vice versa for the modelling of relaxation oscillations on LM side, we suppose that the subjects and their reactions on money market (or financial assets market) are faster than on goods market. The relevant economic theory of the first conditions for IS side is Nicolas Kaldor theory about special form of investment and saving function and the theory of the second conditions for LM \nolinebreak side is some little fluctuation of money demand and supply behaviour called liquidity trap. We describe the dynamical behaviour of these systems and we prove the existence of relaxation oscillations and the stability and unstability of the arcs of relevant isoclines. This quick "jumps" emerging in the relaxation oscillations can ascribe unusual behaviour of economics and new strange phenomenons occurring in economics. \\
\indent
The cycle, which originates in the new IS-LM model, is caused by the shifting of the curve IS or LM. This shifting can be explained like the demonstration of the fiscal or monetary policy. So, this new IS-LM model with relaxation oscillations can be first approximation of the estimation of the government intervention impacts.

\section{Basic Notations and Main Economic Functions}

\subsection{Basic Notations \\}

\begin{tabular}{lp{10cm}}
  $Y$             & aggregate income (GDP, GNP), \\
  $R$             & long-term real interest rate, \\
  $i_S$           & short-term nominal interest rate, \\
  $I$             & investments, \\
  $S$             & savings, \\
  $L$             & demand for money, \\
  $M$			  & supply of money, \\
  $\alpha, \beta$ & parameters of dynamic.
\end{tabular}

\subsection{Main Economic Functions of the IS-LM model \\} 

\begin{tabular}{lp{10cm}}
  $I(Y,R)$           & investment function,\\
  $S(Y,R)$           & saving function, \\
  $L(Y,R), L(Y,i_S)$ & demand for money function,\\
  $M_S$              & constant representing money supply, \\
  $M(Y,i_S)$	     & money supply function.
\end{tabular} \\
\\
\indent
We suppose that all of these functions are continuous and differentiable.

\section{IS-LM Models}

\subsection{Original IS-LM Model}

In this subsection, we briefly present original IS-LM model, its economic assumptions and some standard economic properties of the main economic functions, see e. g. \cite{gandolfo}. \\
\indent
The \textit{economic assumptions of the original IS-LM model} are following:
\begin{itemize}
\item a two-sector economy (households - firms),
\item a demand-oriented model (the supply is fully adapted to demand),
\item $Y \geq 0,~R>0 $,
\item a constant price level (i. e. absence of inflation effect),
\item a money supply is an exogenous quantity (means that money supply is some money stock determined by Central Bank).
\end{itemize}

\begin{df}
The \textit{original IS-LM model} is given by the following system of two algebraic equations 
\begin{equation}
\label{original_static_IS-LM}
\begin{array}{ll}
\textrm{IS:} & I(Y,R)=S(Y,R) \\
\textrm{LM:} & L(Y,R)=M_S,
\end{array}
\end{equation}
where $M_S > 0$, in the static form and by this system of two differential equations
\begin{equation}
\label{original_dynamic_IS-LM}
\begin{array}{ll}
\textrm{IS:} & \frac{d Y}{d t} = \alpha [I(Y,R)-S(Y,R)] \\
\textrm{LM:} & \frac{d R}{d t} = \beta [L(Y,R)-M_S],
\end{array}
\end{equation}
where $\alpha, \beta >0 $, in the dynamic form.
\end{df}

Economic theory puts on the main economic functions of this model some properties. We can present these properties using the following formulas:
\begin{equation}
\label{economic_I} 
  0<\frac{\partial I}{\partial Y}<1, \frac{\partial I}{\partial R}<0,
\end{equation}
\begin{equation}
\label{economic_S}
  0<\frac{\partial S}{\partial Y}<1, \frac{\partial S}{\partial R}>0,
\end{equation}
\begin{equation}
\label{economic_L}
  \frac{\partial L}{\partial Y}>0, \frac{\partial L}{\partial R}<0. 
\end{equation}

The one of the main outcomes of IS-LM model is intersection point of the curve IS and LM representing aggregate macroeconomic equilibrium (i. e. the equilibrium on goods market and on money market, or on financial assets market, simultaneously). This original IS-LM model in general form need not to give us some equilibrium (if there is no intersection point of these curves). \\
\indent
Standard course of the curve IS is decreasing (in general) and of the curve LM is increasing (in general) due to economic sense. It is possible that these curves are non-standard. Either this non-standard course exists only for short part of the curve and describes some different or unusual behaviour for this part or this non-standard course exists for whole curve and describes unusual behaviour for ever. First possibility is more common and second possibility is rather rare.

\subsection{New IS-LM model}

In this subsection, we propose new IS-LM model which tries to eliminate two main deficiencies of original IS-LM model (see last two points in economic assumptions of this model). These deficiencies are the presumption of constant price level and the presumption of strictly exogenous money supply.\\
\indent
The \textit{economic assumptions of the new IS-LM model} are following:
\begin{itemize}
\item a two-sector economy (households - firms),
\item a demand-oriented model (the supply is fully adapted to demand),
\item $Y \geq 0,~R \in \mathbb{R},~i_S \in \mathbb{R}^+$,
\item a variable price level (i. e. included inflation effect),
\item a conjunction of endogenous and exogenous money supply (means that money are generated by credit creation with some certain intervention of Central Bank).
\end{itemize}

We add to original IS-LM model a floating price level, i. e. inflation effect. So, we need to distinguish two type of interest rate - a long-term real interest rate and a short-term nominal interest rate in addition to original IS-LM model. There is the long-term real interest rate on goods market and the short-term nominal interest rate on money market (or financial assets market). It holds well-known relation
\begin{equation}
i_S = R - MP + \pi^e, 
\end{equation}
where $MP$ is a maturity premium and $\pi^e$ is an inflation rate. While $MP$ and $\pi^e$ are constants, it holds $\frac{d i_S}{d t} = \frac{d (R - MP + \pi^e)}{d t} = \frac{d R}{d t}$.\\
\indent
Then, we consider that the money supply is not strictly exogenous quantity, but the supply of money is endogenous quantity (money are generated in economics by credit creation) with some exogenous part (certain money stock determined by Central bank).

\begin{df}
\label{definition_M}
We define the \emph{supply of money} by the formula
\begin{equation}
  M(Y,i_S) + M_S  ,
\end{equation}
where $M(Y,i_S)$ represents the endogenous part of money supply and $M_S>0$ represents the exogenous part of money supply. 
\end{df}

So, there are the functions $L(Y,i_S)=L(Y,R - MP + \pi^e)$ and $M(Y,i_S)=M(Y,R - MP + \pi^e)$ on the money market. It also holds $\frac{\partial L(Y,i_S)}{\partial i_S}=\frac{\partial L(Y,R - MP + \pi^e)}{\partial R}$ and $\frac{\partial M(Y,i_S)}{\partial i_S}=\frac{\partial M(Y,R - MP + \pi^e)}{\partial R}$ because of constant $MP$ and $\pi^e$.\\
\indent
Now, we can define the new IS-LM model.

\begin{df}
The \textit{new IS-LM model} is given by the following system of two algebraic equations 
\begin{equation}
\label{new_static_IS-LM}
\begin{array}{ll}
\textrm{IS:} & I(Y,R)=S(Y,R) \\
\textrm{LM:} & L(Y,R- MP + \pi^e)=M(Y,R - MP + \pi^e)+M_S,
\end{array}
\end{equation}
where $M_S > 0$, in the static form and by this system of two differential equations
\begin{equation}
\label{new_dynamic_IS-LM}
\begin{array}{ll}
\textrm{IS:} & \frac{d Y}{d t} = \alpha [I(Y,R)-S(Y,R)] \\
\textrm{LM:} & \frac{d R}{d t} = \beta [L(Y,R- MP + \pi^e)-M(Y,R - MP + \pi^e)-M_S],
\end{array}
\end{equation}
where $\alpha, \beta >0 $, in the dynamic form.
\end{df}

The investment, saving and money demand function have the same standard economic properties (\ref{economic_I}), (\ref{economic_S}) and (\ref{economic_L}). We have to put on our new function of money supply the standard economic properties. These properties are
\begin{equation}
\label{economic_M_Y} 
0 < \frac{\partial M}{\partial Y} < \frac{\partial L}{\partial Y},
\end{equation}
\begin{equation}
\label{economic_M_R} 
\frac{\partial M}{\partial R} > 0.
\end{equation}
The formula (\ref{economic_M_Y}) means that the relation between supply of money and aggregate income is positive and that the rate of increase of money supply depending on aggregate income is smaller than the rate of increase of money demand depending on aggregate income because the banks are more cautious than another subjects. And the formula (\ref{economic_M_R}) means that the relation between supply of money and interest rate is positive.\\
\indent
It is easy to see (using the Implicit Function Theorem) that the course of the curve LM is increasing in this new IS-LM model with properties (\ref{economic_L}), (\ref{economic_M_Y}) and (\ref{economic_M_R}). If it also holds
\begin{equation}
\label{economic_I_S}
\frac{\partial I}{\partial Y} < \frac{\partial S}{\partial Y},
\end{equation}
in additions to the properties (\ref{economic_I}) and (\ref{economic_S}), then the course of the curve IS will be decreasing. Now, we denote the function, whose graph is the curve IS, as $R_{IS}(Y)$ and the function, whose graph is the curve LM, as $R_{LM}(Y)$. These functions exist because of the Implicit Function Theorem. Now, if we assume
\begin{equation}
\label{condition_for_intersection_point}
\lim_{Y \rightarrow 0^+} R_{IS}(Y) > \lim_{Y \rightarrow 0^+} R_{LM}(Y),
\end{equation}
then there will exist at least one intersection point of the curve IS and LM.\\
\indent
These presented model is IS-LM model without two main deficiencies of the original model and moreover we are certain that there exists at least one aggregate macroeconomic equilibrium.

\section{New IS-LM Model and Relaxation Oscillations}

The relaxation oscillations can originate in the model described by the system of two differential equations when we suppose that one of main variable are changing very slowly in time in proportion to the second variable. So, the derivation of this variable by time is approaching to zero. In our model, the main variables are the aggregate income $Y$ and the interest rate $R$. So, there can exist relaxation oscillations on the goods market and on the money market (or financial assets market).

\subsection{Relaxation Oscillations on Goods Market}

In this case, we assume that the subjects and their reactions on goods market are faster than on money market. So, we suppose that the interest rate $R$ is changing very slowly in time in proportion to the aggregate income $Y$. We can describe this  situation by following equations
\begin{equation}
\label{dynamic_model_IS_parameter} 
  \begin{array}{lll}
  \frac{d Y}{d t} & = & \alpha [I(Y,R)-S(Y,R)]  \\
  \frac{d R}{d t} & = & \epsilon \beta [L(Y,R - MP + \pi^e)-M(Y,R - MP + \pi^e)-M_S]  
  \end{array}
\end{equation}
where $\epsilon \rightarrow 0$ is some very small positive parameter. \\
\indent
If this parameter $\epsilon$ is very small, then we can consider $\frac{d R}{d t} = 0$ and we can write previous equations in the following forms.
\begin{equation}
\label{dynamic_model_IS_parameter_2} 
  \begin{array}{lll}
  \frac{d Y}{d t} & = &  \alpha [I(Y,R)-S(Y,R)]  \\
  \frac{d R}{d t} & = & 0  
  \end{array}
\end{equation}

Then, there remains only the curve IS, called isocline of the equation IS. The curve LM does not exist here considering $\frac{d R}{d t} = 0$. Every points of the curve IS are singular points.\\
\indent
Now, we need to formulate new conditions put on the investment and saving function. These conditions will be sufficient for existence of relaxation oscillations on goods market (it is described below). We tend to Nicolas Kaldor's theory about investment and saving function. He described how these functions should look like already in 1940, see \cite{kaldor}. The investment and saving function depends only on $Y$ for some fixed $R$ ($I(Y)$ and $S(Y)$) has so-called "sigma-shaped" graphs, see Figure \ref{fig:I_and_S_Kaldor}.

\begin{figure}[ht]
  \centering
  \includegraphics[height=4cm]{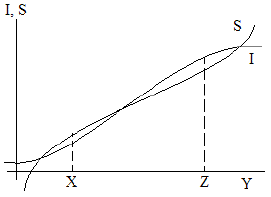}
  \caption{Shapes of the graphs of $I(Y)$ and $S(Y)$}
  \label{fig:I_and_S_Kaldor}
\end{figure}

Now, we use (with another designations) the description of these properties from Baráková, see \cite{barakova}. So-called \textit{"Kaldor's" conditions} are
\begin{equation}
\label{Kaldor_IS}
  \begin{array}{lll}
  \frac{\partial I}{\partial Y} < \frac{\partial S}{\partial Y} & \textrm{for} & Y \in [0,X), \\
  \frac{\partial I}{\partial Y} > \frac{\partial S}{\partial Y} & \textrm{for} & Y \in (X,Z), \\
  \frac{\partial I}{\partial Y} < \frac{\partial S}{\partial Y} & \textrm{for} & Y \in (Z,\infty), \\
  \end{array}
\end{equation}
where points $X < Z$ are given by equation $\frac{\partial I}{\partial Y} = \frac{\partial S}{\partial Y}$ for some fixed $R$.\\
\indent
It is easy to see (using the Implicit Function Theorem) that isoclines (curve IS and curve LM) of the system (\ref{dynamic_model_IS_parameter}) without requirement of $\epsilon \rightarrow 0$ and with properties (\ref{economic_I}), (\ref{economic_S}), (\ref{economic_L}), (\ref{economic_M_Y}), (\ref{economic_M_R}) and (\ref{Kaldor_IS}) are the following.

\begin{figure}[ht]
  \centering
  \includegraphics[height=5cm]{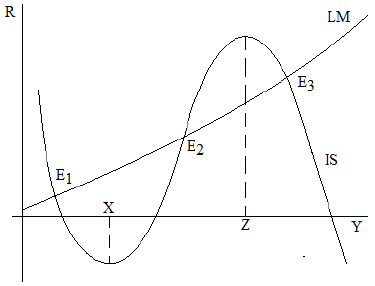}
  \caption{Curves IS and LM in the model (\ref{dynamic_model_IS_parameter}) (excluding $\epsilon \rightarrow 0$)}
  \label{fig:curve_IS_LM_1}
\end{figure}

There can occur at least one and the most three singular points. We can see this using the shifting of the curve IS or LM downwards or upwards.\\
\indent
Now, we consider the system (\ref{dynamic_model_IS_parameter}) with requirement of $\epsilon \rightarrow 0$, so we can consider the system (\ref{dynamic_model_IS_parameter_2}) in the simplified way. There we can see that the variable $R$ is a parameter in equations $\frac{d Y}{d t} =  \alpha [I(Y,R)-S(Y,R)]$. The curve IS determined by the equation $\alpha [I(Y,R)-S(Y,R)]=0$ in this situation (where $R$ is considered as a parameter in this equation), is displayed in the following Figure \ref{fig:curve_IS}.

\begin{figure}[ht]
  \centering
  \includegraphics[height=5cm]{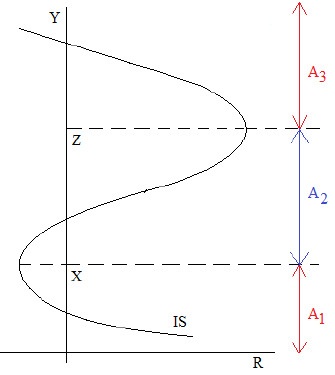}
  \caption{Curve IS considering $R$ as a parameter}
  \label{fig:curve_IS}
\end{figure}

There, every points of the curve IS are singular points.\\
\newpage
\indent
We need to distinguish the stable and unstable arcs of the isocline IS.

\begin{prp} (On stable and unstable arcs of the curve IS)
\label{prp:stable_unstable_arcs_1}
\begin{enumerate}
\item The arcs $A_1$ and $A_3$ of the curve IS are stable arcs.
\item The arc $A_2$ of the curve IS is unstable arc.
\end{enumerate}
\end{prp}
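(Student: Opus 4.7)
In the singular limit $\epsilon\to 0$ the slow variable $R$ becomes a frozen parameter, and on any horizontal line $R=R_0$ the dynamics reduces to the scalar equation $\dot Y = \alpha[I(Y,R_0)-S(Y,R_0)]$. The plan is to read off the stability of each singular point $(Y_0,R_0)$ on the IS curve from the sign of the single eigenvalue of this one-dimensional problem, which is exactly the stability of the corresponding equilibrium of the reduced system \eqref{dynamic_model_IS_parameter_2}.

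First I linearize $\dot Y = \alpha[I(Y,R_0)-S(Y,R_0)]$ at a singular point $Y_0$, obtaining the scalar eigenvalue
\[
\lambda(Y_0,R_0) \;=\; \alpha\left(\frac{\partial I}{\partial Y}-\frac{\partial S}{\partial Y}\right)\bigg|_{(Y_0,R_0)}.
\]
Since $\alpha>0$, the sign of $\lambda$ coincides with the sign of $(\partial I/\partial Y-\partial S/\partial Y)(Y_0,R_0)$: a negative value gives an attractor of the fast subsystem, so a stable singular point, whereas a positive value gives a repeller, hence an unstable one.

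Next I match arcs to intervals. By the Kaldor conditions \eqref{Kaldor_IS} the fold points $X(R_0)<Z(R_0)$ of the S-shaped IS curve are precisely the $Y$-values at which $\partial I/\partial Y=\partial S/\partial Y$; the three arcs $A_1,A_2,A_3$ shown in Figure \ref{fig:curve_IS} therefore correspond to the Kaldor intervals $[0,X(R_0))$, $(X(R_0),Z(R_0))$ and $(Z(R_0),\infty)$ respectively. On $A_1$ and $A_3$ the first and third Kaldor inequalities yield $\partial I/\partial Y-\partial S/\partial Y<0$, so $\lambda<0$ and these arcs consist entirely of attracting equilibria; on $A_2$ the middle Kaldor inequality gives $\partial I/\partial Y-\partial S/\partial Y>0$, so $\lambda>0$ and $A_2$ consists of repelling equilibria.

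The only slightly delicate point is justifying the matching of the $Y$-intervals of the Kaldor partition to the three geometric arcs of the curve, but this is forced by the Implicit Function Theorem applied to $I(Y,R)=S(Y,R)$: the IS curve has a vertical tangent exactly where $\partial(I-S)/\partial Y=0$, so $X(R_0)$ and $Z(R_0)$ are the $Y$-projections of the two fold points, and the sign of $\partial(I-S)/\partial Y$ is then constant on each arc between consecutive folds. After this observation the proposition reduces to the direct sign check above.
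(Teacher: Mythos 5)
Your proof is correct, but it follows a genuinely different route from the paper's. You analyze the one-dimensional \emph{fast subsystem} $\dot Y=\alpha[I(Y,R_0)-S(Y,R_0)]$ with $R_0$ frozen: the scalar eigenvalue $\alpha(I_Y-S_Y)$ is negative on $A_1$ and $A_3$ and positive on $A_2$ by the Kaldor conditions \eqref{Kaldor_IS}, and the identification of the arcs with the Kaldor intervals is secured by noting that $I_R-S_R<0$ (from \eqref{economic_I}, \eqref{economic_S}) never vanishes, so the folds of the IS curve occur exactly where $I_Y=S_Y$. The paper instead works with the full planar system \eqref{dynamic_model_IS_parameter} \emph{without} the limit $\epsilon\to 0$: it computes the $2\times 2$ Jacobian at the isolated equilibria $E_1,E_2,E_3$ (the intersections of IS with LM, Figure \ref{fig:curve_IS_LM_1}), shows $E_1,E_3$ are attractors via the sign of the trace and of $\det J$, shows $E_2$ is a saddle via a slope comparison between IS and LM, and then infers the stability of the arc containing each equilibrium; it also treats the degenerate two-equilibrium and one-equilibrium configurations separately. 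Your approach is the standard singular-perturbation argument and has real advantages here: it establishes attractivity or repulsivity at \emph{every} point of each arc rather than only at the points where LM happens to cross IS, it needs no hypotheses on $L$ and $M$ at all (only \eqref{Kaldor_IS} and $\alpha>0$), and it directly addresses the notion of arc stability that is actually used in the relaxation-oscillation construction (attraction under the fast flow). What the paper's computation buys in exchange is the classification of the equilibria of the full system (node/focus versus saddle), which is reused later in the discussion of Figure \ref{fig:relaxation_oscillations_IS-LM_1}; that information is not produced by your fast-subsystem argument, but it is also not needed for the proposition as stated. One small caution: your phrase ``vertical tangent'' should be read with respect to the slow variable $R$ (i.e., $dR/dY=0$ in the $(Y,R)$-plane of Figure \ref{fig:curve_IS}); with that understanding the fold identification is exactly right.
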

\begin{proof}
If we consider the system (\ref{dynamic_model_IS_parameter_2}), where variable $R$ is considered as a parameter in the equation $\frac{d Y}{d t} =  \alpha [I(Y,R)-S(Y,R)]$, then the stability and unstability of these arcs will be given by qualification of possible singular points in the system (\ref{dynamic_model_IS_parameter}) excluding $\epsilon \rightarrow 0$ here and below in this proof. \\
\indent
If we require (\ref{condition_for_intersection_point}) (and we consider only $Y \geq 0$), then there exist at least one and at the most three singular points (see Figure \ref{fig:curve_IS_LM_1} and try to shift the curve IS or LM downwards or upwards). Let denote $J$ as the Jacobi's matrix of the system (\ref{dynamic_model_IS_parameter}) and $\lambda_{1,2}$ as the eigenvalues of this Jacobi's matrix. It holds
\begin{equation}
\lambda_{1,2} = \frac{1}{2} \left[ \alpha (I_Y - S_Y) + \epsilon \beta (L_R - M_R) \pm \sqrt{\left[ \alpha (I_Y - S_Y) + \epsilon \beta (L_R - M_R)  \right]^2 - 4detJ} \right] \nonumber
\end{equation}
where $det J = \alpha \epsilon \beta \left[ (I_Y - S_Y)(L_R-M_R) - (I_R - S_R) (L_Y - M_Y) \right]$.\\
\indent
Now, we consider three singular points. The singular point $E_1$ or $E_3$ lies in the part of arc $A_1$ or $A_3$ (see the Figure \ref{fig:curve_IS_LM_1} and \ref{fig:curve_IS}). So, we have to prove that these two singular points are stable points. Then, this implies that arcs $A_1$ and $A_3$ are stable arcs. The real part of eigenvalues $Re(\lambda_{1,2}) < 0$ in the point $E_1$ or $E_3$ because it holds $I_Y < S_Y$ according to Kaldor's condition (\ref{Kaldor_IS}) (the point $E_1$ or $E_3$ lies in the area of $Y \in [0,X] \cup [Z, \infty)$), $L_R - M_R<0$ according to (\ref{economic_L}) and (\ref{economic_M_R}) and it holds $det J>0$ according to economic condition (\ref{economic_I}), (\ref{economic_S}), (\ref{economic_L}), (\ref{economic_M_Y}), (\ref{economic_M_R}) and Kaldor's conditions (\ref{Kaldor_IS}). From this follows that $E_1$ and $E_3$ are attractors. \\
The singular point $E_2$ lies in the part of arc $A_2$ (see the Figure \ref{fig:curve_IS_LM_1} and \ref{fig:curve_IS}). So, we have to prove that this singular point are unstable point, more precisely unstable saddle point. Then, this implies that arcs $A_2$ is unstable arc. We can see that the slope of the curve IS is greater than the slope of the curve LM in the point $E_2$ (see Figure \ref{fig:curve_IS_LM_1}). Using the Implicit Function Theorem it holds $-\frac{I_Y - S_Y}{I_R - S_R}>-\frac{L_Y - M_Y}{L_R - M_R}$. This and the conditions (\ref{economic_I}), (\ref{economic_S}), (\ref{economic_L}), (\ref{economic_M_Y}), (\ref{economic_M_R}) and Kaldor's conditions (\ref{Kaldor_IS}) (the point $E_2$ lies in the area of $Y \in (X,Z)$) imply that determinant of Jacobi's matrix of the system (\ref{dynamic_model_IS_parameter}) in the point $E_2$ is negative. From this follows that $E_2$ is unstable saddle point. \\
\indent
Now, we consider two singular points. First singular point is qualitatively the same as the point $E_1$ or $E_3$ from previous situation with three singular point. This singular point lies in the area of the arc $A_1$ or $A_3$. So, these arcs are stable. \\
The second point describe levels of $Y$ and $R$ where the curve IS and the curve LM has the same slope. Using the Implicit Function Theorem it holds $-\frac{I_Y - S_Y}{I_R - S_R}=-\frac{L_Y - M_Y}{L_R - M_R}$. This implies zero determinant of Jacobi's matrix of the system (\ref{dynamic_model_IS_parameter}) in this point. This leads to at least one zero eigenvalue. This singular point is surely unstable and lies in the area of the arc $A_2$. From this follows that $A_2$ is unstable also in this situation of two singular points. \\
\indent
If there originates only one singular point, this singular point will be qualitatively the same as the point $E_1$ or $E_3$ from situation with three singular point. This singular point lies in the area of the arc $A_1$ or $A_3$. So, these arcs are stable.
\end{proof}

\begin{thm} (On existence of relaxation oscillations on goods market) \\
Let us consider the new IS-LM model with very slow changes of variable $R$ in time (\ref{dynamic_model_IS_parameter}) (including $\epsilon \rightarrow 0$) with economic properties (\ref{economic_I}), (\ref{economic_S}), (\ref{economic_L}), (\ref{economic_M_Y}), (\ref{economic_M_R}) and Kaldor's conditions (\ref{Kaldor_IS}). Then there exist the clockwise relaxation oscillations.
\end{thm}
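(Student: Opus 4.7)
The plan is to treat (\ref{dynamic_model_IS_parameter_2}) as the singular limit of (\ref{dynamic_model_IS_parameter}) and to invoke the classical singular-perturbation (Tikhonov--Fenichel / Mishchenko--Pontryagin) theory for planar slow-fast systems. Here $Y$ is the fast variable and $R$ is the slow one, and the critical manifold is precisely the curve IS, i.e.\ the zero set of $F(Y,R):=I(Y,R)-S(Y,R)$.

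First I would record the geometry of the critical manifold. Since $\partial F/\partial R = I_R - S_R < 0$ throughout, the Implicit Function Theorem represents IS globally as a graph $R=R_{IS}(Y)$, and Kaldor's conditions (\ref{Kaldor_IS}) force $R_{IS}$ to be strictly decreasing on $[0,X)$, strictly increasing on $(X,Z)$, and strictly decreasing on $(Z,\infty)$, with a local minimum at the fold $(X,R_X)$ and a local maximum at the fold $(Z,R_Z)$. This identifies the three arcs $A_1,A_2,A_3$ and the two fold points where normal hyperbolicity fails. Proposition \ref{prp:stable_unstable_arcs_1} then tells me that $A_1$ and $A_3$ are attracting and $A_2$ is repelling for the layer problem $\dot Y=\alpha F(Y,R)$ with $R$ frozen.

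Next I would analyse the reduced slow flow on $A_1\cup A_3$. After rescaling time to $\tau=\epsilon t$, the slow equation reads $\dot R=\beta[L(Y,R-MP+\pi^e)-M(Y,R-MP+\pi^e)-M_S]$, whose sign is governed by the position of the point relative to the (monotonically increasing) LM curve: because $\partial(L-M)/\partial R<0$, points above LM give $\dot R<0$ and points below LM give $\dot R>0$. In the configuration needed for a limit cycle, LM meets IS only on the unstable middle arc $A_2$; by (\ref{condition_for_intersection_point}) together with the monotonicity of $R_{LM}$ this puts the whole of $A_1$ strictly above LM and the whole of $A_3$ strictly below it. Hence on $A_1$ one has $\dot R<0$, so the slow motion carries the trajectory monotonically to the fold $(X,R_X)$, while on $A_3$ one has $\dot R>0$, so it is carried to the fold $(Z,R_Z)$. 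At each fold the slow manifold loses stability along the fast fibre $R={\rm const}$, and integrating the layer problem at that level sends the trajectory to the unique other attracting equilibrium on the opposite branch: a horizontal jump from $(X,R_X)$ onto $A_3$ at some $Y_*>Z$, and a horizontal jump from $(Z,R_Z)$ onto $A_1$ at some $Y^{**}<X$. Concatenating the two slow arcs with the two fast jumps yields a closed singular orbit $\Gamma_0$ in the $(Y,R)$-plane, traversed in the orientation dictated by the signs of the slow drifts and of the jumps; this is precisely the orientation claimed in the statement.

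Finally, I would appeal to the Mishchenko--Pontryagin theorem on relaxation oscillations (or, alternatively, construct a narrow annular trapping region around $\Gamma_0$ and apply Poincaré--Bendixson, using Proposition \ref{prp:stable_unstable_arcs_1} to exclude interior attractors other than the saddle on $A_2$) to conclude that for all sufficiently small $\epsilon>0$ the full system (\ref{dynamic_model_IS_parameter}) carries a genuine limit cycle $O(\epsilon^{2/3})$-close to $\Gamma_0$. The principal obstacle is the passage through the two fold points, where Fenichel theory breaks down and one needs either a blow-up argument or the Mishchenko--Pontryagin estimates to guarantee that the perturbed orbit really detaches horizontally from the slow manifold at the fold and re-attaches on the opposite stable branch; a subsidiary point that has to be argued is that the hypotheses force the LM curve to meet IS only on the unstable arc $A_2$, since otherwise a stable equilibrium on $A_1$ or $A_3$ would absorb the trajectory and no cycle could exist.
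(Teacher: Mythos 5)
Your construction has the same geometric skeleton as the paper's proof --- singular limit, the IS curve as the set of equilibria of the fast equation, stable arcs $A_1,A_3$ and unstable arc $A_2$ from Proposition \ref{prp:stable_unstable_arcs_1}, slow drift along the stable arcs and horizontal fast jumps at the two folds, concatenated into a closed orbit --- but you carry it considerably further than the paper does. The paper's proof stops at a verbal description of the singular cycle in the degenerate system (\ref{dynamic_model_IS_parameter_2}): it never returns to $\epsilon>0$, and it only says that the sign of $\beta[L-M-M_S]$ ``may change along the curve IS in such a way'' that a cycle forms. You make both missing steps explicit: (i) you determine the sign of the slow drift on each arc from the position of the (increasing) LM curve, and correctly observe that the cycle exists only if LM meets IS solely on the unstable arc $A_2$ --- this is a genuine additional hypothesis, not a consequence of (\ref{economic_I})--(\ref{economic_M_R}) and (\ref{Kaldor_IS}), so the theorem as stated is really conditional on it, exactly as your ``subsidiary point'' says; and (ii) you invoke Mishchenko--Pontryagin (or a trapping annulus plus Poincar\'e--Bendixson) to turn the singular cycle into an actual limit cycle for small $\epsilon>0$, including the delicate passage through the fold points where normal hyperbolicity fails --- none of which appears in the paper. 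Two small caveats: the jumps land on the opposite branch only if $A_3$ extends down to the fold level $R_X$ and $A_1$ up to $R_Z$ (implicit in the paper's figure, worth stating); and the claimed ``clockwise'' orientation is relative to the paper's axis convention with $Y$ vertical and $R$ horizontal --- with the usual $Y$-horizontal convention your cycle is counterclockwise, so you should fix the convention before asserting the orientation.
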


\begin{figure}[ht]
  \centering
  \includegraphics[height=6.5cm]{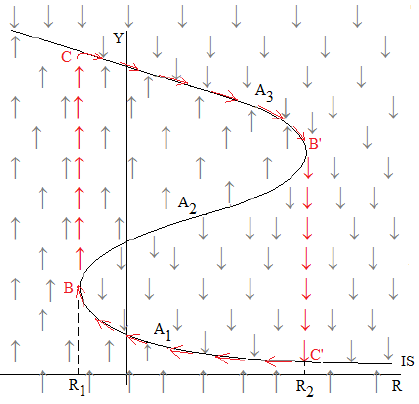}
  \caption{Relaxation oscillations in the model (\ref{dynamic_model_IS_parameter})}
  \label{fig:relaxation_oscillations_IS}
\end{figure}

\begin{proof}
If we have new IS-LM model with very slow changes of variable $R$ in time (\ref{dynamic_model_IS_parameter}) considering $\epsilon \rightarrow 0$, then we can consider the system (\ref{dynamic_model_IS_parameter_2}) instead the system (\ref{dynamic_model_IS_parameter}).\\
\indent
Kaldor's conditions (\ref{Kaldor_IS}) and the economic properties (\ref{economic_I}), (\ref{economic_S}) of the investment and saving function ensure the form of the curve IS using the Implicit Function Theorem, see Figure \ref{fig:curve_IS}.\\
\indent
The trajectories of the system (\ref{dynamic_model_IS_parameter_2}) are directed almost vertically downwards or upwards (parallel to axis $Y$) considering $\frac{d R}{d t} = 0$, see the Figure \ref{fig:relaxation_oscillations_IS}. Up or down direction of the trajectories is given by the sign of the function $\beta [L(Y,R - MP + \pi^e)-M(Y,R - MP + \pi^e)-M_S]$ on the curve IS. So, the direction of these trajectories is dependent on the stability or unstability of the arcs $A_1$, $A_2$ and $A_3$, see the Proposition \ref{prp:stable_unstable_arcs_1}. The trajectories are attracted to the stable arcs $A_1$ or $A_3$ and are drove away the unstable arc $A_2$. The speed of trajectories are finite near the isocline IS and nearness of the curve IS the trajectories go along the curve IS. The speed of trajectories are infinite large elsewhere.\\
\indent
For $R<R_1$, $R>R_2$ there exists only one stable stationary state and for $R_1<R<R_2$ there exist three stationary states: two stable and one unstable between them. Every points of the curve IS are stationary points. $R_1, R_2$ are bifurcations values. If the parameter  $R$ is changing very slowly, then the coordinates of stationary points are changed as displayed in the Figure \ref{fig:relaxation_oscillations_IS}.\\
\indent 
Now, we construct the cycle which is one vibration of the relaxation oscillations. We are changing the parameter $R$ from the level $R_2$ to $R_1$. If the moving point is on or near the stable arc $A_1$, the moving point will go along this stable arc $A_1$, then it will pass the unstable arc $A_2$ from point $B$ to $C$, see Figure \ref{fig:relaxation_oscillations_IS}. The velocity between the point $B$ and $C$ is infinite large. There originates some "jump". There is the similar situation if we are changing the parameter $R$ from the level $R_1$ to $R_2$. If the moving point is on or near the stable arc $A_3$, the moving point will go along the stable arc $A_3$ and then the moving point is attracted from point $B'$ to $C'$, see Figure \ref{fig:relaxation_oscillations_IS}. The sign of the function $\beta [L(Y,R - MP + \pi^e)-M(Y,R - MP + \pi^e)-M_S]$ may change along the curve IS in such a way, that these trajectories form some path resembled limit cycle called relaxation oscillation. This oscillation contains the trajectories described by stable arcs $A_1$ and $A_3$ with finite velocity and the trajectories described by vertical segments (between points $B$ and $C$ and also between points $B'$ and $C'$) with infinite velocity (looking like a "jump"), see the Figure \ref{fig:relaxation_oscillations_IS}. These trajectories form clockwise cycle because of the form of the curve IS and stability or unstability of the relevant arcs. Thus, we have just constructed the clockwise cycle which is one vibration of the clockwise relaxation oscillations.
\end{proof}

We can display this oscillation using the isoclines IS and LM with shifting curve LM downwards and upwards, see the Figure \ref{fig:relaxation_oscillations_IS-LM_1}.

\begin{figure}[ht]
  \centering
  \includegraphics[height=5cm]{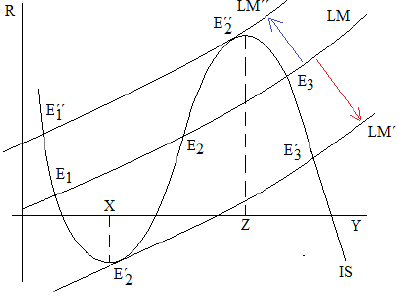}
  \caption{Shifts of the curve LM}
  \label{fig:relaxation_oscillations_IS-LM_1}
\end{figure}

In this figure, we can see three singular points $E_1$, $E_2$ and $E_3$ (intersection points of IS and LM). $E_1$ and $E_3$ are stable singular points, $E_2$ is unstable saddle point. Then the curve LM shifts upwards or downwards according the changing of the parameter $R$ until there are two singular points $E_1''$ and $E_2''$ (intersection points of IS and LM''), or $E_2'$ and $E_3'$ (intersection points of IS and LM'). The point $E_1''$ and $E_3'$ is stable node or stable focus and the point $E_2'$ and $E_2''$ is unstable, see the proof of the Proposition \ref{prp:stable_unstable_arcs_1}.

\subsection{Relaxation Oscillations on Money Market, or Financial Assets Market}

In this case, we assume that the subjects and their reactions on money market are faster than on goods market. So, we suppose that aggregate income $Y$ is changing very slowly in time in proportion to the interest rate $R$. We can describe this  situation by following equations
\begin{equation}
\label{dynamic_model_LM_parameter} 
  \begin{array}{lll}
  \frac{d Y}{d t} & = & \epsilon \alpha [I(Y,R)-S(Y,R)]  \\
  \frac{d R}{d t} & = & \beta [L(Y,R - MP + \pi^e)-M(Y,R - MP + \pi^e)-M_S]  
  \end{array}
\end{equation}
where $\epsilon \rightarrow 0$ is some very small positive parameter.\\
\indent
If this parameter $\epsilon$ is very small, then we can consider $\frac{d Y}{d t} = 0$ and we can write previous equations in the following forms.
\begin{equation}
\label{dynamic_model_LM_parameter_2} 
  \begin{array}{lll}
  \frac{d Y}{d t} & = & 0  \\
  \frac{d R}{d t} & = & \beta [L(Y,R - MP + \pi^e)-M(Y,R - MP + \pi^e)-M_S]
  \end{array}
\end{equation}

Then, there remains only the curve LM, called isocline of the equation LM. The curve IS does not exist here considering $\frac{d Y}{d t} = 0$. Every points of the curve LM are singular points.\\
\indent
Now, we need to formulate new conditions put on the demand for money and supply of money function. These conditions will be sufficient for existence of relaxation oscillations on money or financial assets market (it is described below). We suppose some unusual behaviour of the demand for money and supply of money. We assume three phases of the course of money demand and money supply function depending on $i_S$ for some fixed $Y$. In the first phase, for $i_S \in [0, P), P>0$, these subjects on the money (or financial assets) market behave usual and the course of the money demand and money supply function is standard how we describe in the condition (\ref{economic_L}) and (\ref{economic_M_R}). In the second phase, for $i_S \in (P, Q), P < Q$, these subjects behave unusual, precisely reversely. We can describe this behaviour using following formula
\begin{equation}
\label{unusual_economic_L_R} 
\frac{\partial L}{\partial i_S} = \frac{\partial L}{\partial R} > 0,
\end{equation}
\begin{equation}
\label{unusual_economic_M_R} 
\frac{\partial M}{\partial i_S} = \frac{\partial M}{\partial R} < 0.
\end{equation}
These properties correspond to unusual economic situation called liquidity trap. This means that the subjects on money or financial assets market (demand side) prefer liquidity despite relatively high level of (short-term nominal) interest rate. They own money rather than stocks, although they could have bigger gain because of relative high level of (short-term nominal) interest rate. This "irrational" behaviour of these subjects can be caused by big risk of holding these stocks and by small willingness to undergo this risk. The supply of money fully adapts to money demand (we assume demand-oriented model). This phase should be small. In the third phase, for $i_S \in (Q, \infty)$, these subjects behave usual as in the first phase. We can see the graphs of money demand and money supply function describing this behaviour on the following Figure \ref{fig:L(i_S)_and_M(i_S)_3_phases}.

\begin{figure}[ht]
  \centering
  \includegraphics[height=5cm]{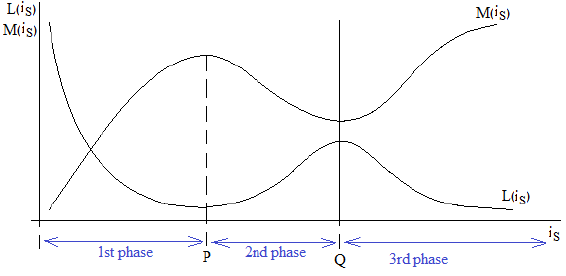}
  \caption{Three phases of the graphs of $L(i_S)$ and $M(i_S)$}
  \label{fig:L(i_S)_and_M(i_S)_3_phases}
\end{figure}

\begin{rmk}
$\frac{\partial L}{\partial i_S} = \frac{\partial L}{\partial R} = 0$ and $\frac{\partial M}{\partial i_S} = \frac{\partial M}{\partial R} = 0$ in the point $P$ and $Q$. 
\end{rmk}

In the text below, we will call this behaviour of money demand and money supply like \textit{three phases money demand and money supply}.\\
\indent
It is easy to see (using the Implicit Function Theorem and we can change axis) that isoclines (curve IS and curve LM) of the system (\ref{dynamic_model_LM_parameter}) without requirement of $\epsilon \rightarrow 0$ and with properties (\ref{economic_I}), (\ref{economic_S}), (\ref{economic_I_S}) and three phases money demand and money supply are the following.

\begin{figure}[ht]
  \centering
  \includegraphics[height=5cm]{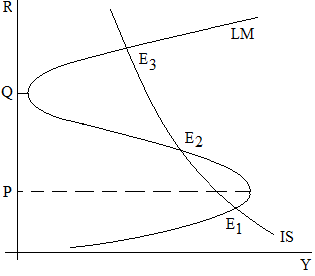}
  \caption{Curves IS and LM in the model (\ref{dynamic_model_LM_parameter}) (excluding $\epsilon \rightarrow 0$)}
  \label{fig:curve_IS_LM_2}
\end{figure}

There can occur at least one and the most three singular points. We can see this using the shifting of the curve IS or LM downwards or upwards.\\
\indent
Now, we consider the system (\ref{dynamic_model_LM_parameter}) with requirement of $\epsilon \rightarrow 0$, so we can consider the system (\ref{dynamic_model_LM_parameter_2}) in the simplified way. There we can see that the variable $Y$ is a parameter in equations $\frac{d R}{d t} =  \beta [L(Y,R - MP + \pi^e)-M(Y,R - MP + \pi^e)-M_S]$. The curve LM determined by the equation $\beta [L(Y,R - MP + \pi^e)-M(Y,R - MP + \pi^e)-M_S]=0$ in this situation (where $R$ is considered as a parameter) is displayed in the following Figure \nolinebreak \ref{fig:curve_LM}.

\begin{figure}[ht]
  \centering
  \includegraphics[height=5cm]{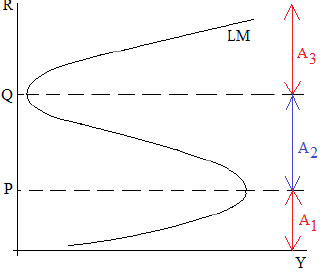}
  \caption{Curve LM considering $Y$ as a parameter}
  \label{fig:curve_LM}
\end{figure}

There, every points of the curve IS are singular points.\\
\newpage
\indent
We need to distinguish the stable and unstable arcs of the isocline LM.

\begin{prp} (On stable and unstable arcs of the curve LM)
\label{prp:stable_unstable_arcs_2}
\begin{enumerate}
\item The arcs $A_1$ and $A_3$ of the curve LM are stable arcs.
\item The arc $A_2$ of the curve LM is unstable arc.
\end{enumerate}
\end{prp}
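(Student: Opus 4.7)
The plan is to mirror, mutatis mutandis, the argument of Proposition \ref{prp:stable_unstable_arcs_1}, swapping the roles of the IS and LM equations. The essential idea is that on the degenerate system (\ref{dynamic_model_LM_parameter_2}) every point of LM is singular and the Jacobian is identically degenerate, so stability/instability of the arcs cannot be read off directly; instead I would pass to the nondegenerate system (\ref{dynamic_model_LM_parameter}) with $\epsilon > 0$, classify its isolated singular points via the sign of $\operatorname{tr} J$ and $\det J$, and then observe that in the limit $\epsilon \to 0$ these singular points collapse onto the corresponding arc of LM, inheriting its local stability behaviour.

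Concretely, I would first write the Jacobian of (\ref{dynamic_model_LM_parameter}) as
\[
  \operatorname{tr} J = \epsilon\alpha(I_Y - S_Y) + \beta(L_R - M_R), \qquad
  \det J = \epsilon\alpha\beta\bigl[(I_Y - S_Y)(L_R - M_R) - (I_R - S_R)(L_Y - M_Y)\bigr].
\]
Under the hypothesis (\ref{condition_for_intersection_point}) and the reverse-sigma shape of LM induced by the three-phase money demand/supply, there are at most three intersection points $E_1, E_2, E_3$ of IS and LM, with $E_1, E_3$ lying in the regions where the LM arcs $A_1, A_3$ reside and $E_2$ lying in the liquidity-trap region containing $A_2$. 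At $E_1$ and $E_3$ the standard conditions (\ref{economic_L}), (\ref{economic_M_R}) give $L_R - M_R < 0$; combined with (\ref{economic_I_S}) this forces $\operatorname{tr} J < 0$, while (\ref{economic_I}), (\ref{economic_S}), (\ref{economic_L}), (\ref{economic_M_Y}) force both summands in $\det J$ to be positive. Hence $E_1, E_3$ are attractors, and letting $\epsilon \to 0$ shows that the whole arcs $A_1, A_3$ are attracting.

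For $E_2$ the unusual conditions (\ref{unusual_economic_L_R}), (\ref{unusual_economic_M_R}) reverse the signs of $L_R$ and $M_R$, so the first summand in $\det J$ changes sign; the saddle character must then come from a slope comparison between IS and LM at $E_2$, exactly as in Proposition \ref{prp:stable_unstable_arcs_1}. Reading off Figure \ref{fig:curve_IS_LM_2} and applying the Implicit Function Theorem, I would translate the geometric fact ``the LM branch through $E_2$ is steeper (with the opposite orientation) than IS'' into an inequality of the form $-\tfrac{I_Y-S_Y}{I_R-S_R}$ versus $-\tfrac{L_Y-M_Y}{L_R-M_R}$, which after clearing denominators (whose signs must be tracked carefully because $L_R - M_R > 0$ on $A_2$) yields $\det J < 0$. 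This gives a saddle, hence instability of $A_2$. The degenerate two-point and one-point cases are then handled by exactly the same collapsing/tangency argument used in Proposition \ref{prp:stable_unstable_arcs_1}.

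The main obstacle is precisely this slope-comparison step at $E_2$: the reversal of the signs of $L_R$ and $M_R$ on the liquidity-trap arc means that the sign manipulations used in the IS-side proof do not translate verbatim, and I must verify that the geometric picture in Figure \ref{fig:curve_IS_LM_2} genuinely produces the correct inequality between the slopes (including the correct direction of the inequality after multiplying by the now-positive quantity $L_R - M_R$). Once this sign bookkeeping is done, stability of $A_1, A_3$ and instability of $A_2$ follow by the same scheme as in the IS-side argument.
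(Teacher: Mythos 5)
Your proposal follows essentially the same route as the paper's own proof: classify the isolated singular points of the full system (\ref{dynamic_model_LM_parameter}) with $\epsilon>0$ via the sign of $\operatorname{tr}J$ and $\det J$, obtain attractors at $E_1,E_3$ from $L_R-M_R<0$ and $I_Y<S_Y$, obtain a saddle at $E_2$ from the IS/LM slope comparison via the Implicit Function Theorem, and treat the two- and one-point cases by the tangency ($\det J=0$) argument. The sign bookkeeping you flag as the main obstacle does go through (since $I_R-S_R<0$ and $L_R-M_R>0$ on $A_2$, clearing denominators flips the inequality and yields $\det J<0$), and in fact your attention to that point is more careful than the paper's, which asserts it without detail.
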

\begin{proof}
If we consider the system (\ref{dynamic_model_LM_parameter_2}), where variable $Y$ is considered as a parameter in the equation $\frac{d R}{d t} =  \beta [L(Y,R - MP + \pi^e)-M(Y,R - MP + \pi^e)-M_S]$, then the stability and unstability of these arcs will be given by qualification of possible singular points in the system (\ref{dynamic_model_LM_parameter}) excluding $\epsilon \rightarrow 0$ here and below in this proof.\\
\indent 
If we require (\ref{condition_for_intersection_point}) (and we consider only $Y \geq 0$), then there exist at least one and at the most three singular points (see Figure \ref{fig:curve_IS_LM_2} and try to shift the curve IS or LM downwards or upwards). Let denote $J$ as the Jacobi's matrix of the system (\ref{dynamic_model_LM_parameter}) and $\lambda_{1,2}$ as the eigenvalues of this Jacobi's matrix. It holds 
\begin{equation}
\lambda_{1,2} = \frac{1}{2} \left[ \epsilon \alpha (I_Y - S_Y) + \beta (L_R-M_R) \pm \sqrt{\left[ \epsilon \alpha (I_Y - S_Y) + \beta (L_R - M_R)  \right]^2 - 4 detJ} \right] \nonumber
\end{equation}
where  $det J = \epsilon \alpha \beta \left[ (I_Y - S_Y)(L_R-M_R) - (I_R - S_R) (L_Y-M_Y) \right]$.\\
\indent
Now, we consider three singular points. The singular point $E_1$ or $E_3$ lies in the part of arc $A_1$ or $A_3$ (see the Figure \ref{fig:curve_IS_LM_2} and \ref{fig:curve_LM}). So, we have to prove that these two singular points are stable points. Then, this implies that arcs $A_1$ and $A_3$ are stable arcs. The real part of eigenvalues $Re(\lambda_{1,2}) < 0$ in the point $E_1$ or $E_3$ because it holds $I_Y < S_Y$ according to the condition (\ref{economic_I_S}), $L_R < M_R$ according to the condition (\ref{economic_L}) and (\ref{economic_M_R}) and it holds $det J>0$ according to conditions (\ref{economic_I}), (\ref{economic_S}), (\ref{economic_L}), (\ref{economic_M_Y}), (\ref{economic_M_R}) and (\ref{economic_I_S}) (the point $E_1$ or $E_3$ lies in the area of $R \in [0,P] \cup [Q, \infty)$). From this follows that $E_1$ and $E_3$ are attractors. \\
The singular point $E_2$ lies in the part of arc $A_2$ (see the Figure \ref{fig:curve_IS_LM_2} and \ref{fig:curve_LM}). So, we have to prove that this singular point are unstable point, more precisely unstable saddle point. Then, this implies that arcs $A_2$ is unstable arc. We can see that the slope of the curve IS is smaller than the slope of the curve LM in the point $E_2$ (see Figure \ref{fig:curve_IS_LM_2}). Using the Implicit Function Theorem for the neighbourhood of the point $E_2$, it holds $-\frac{I_Y - S_Y}{I_R - S_R}<-\frac{L_Y-M_Y}{L_R-M_R}$. This and the conditions (\ref{economic_I}), (\ref{economic_S}), (\ref{economic_M_Y}), (\ref{economic_I_S}), (\ref{unusual_economic_L_R}) and (\ref{unusual_economic_M_R}) (the point $E_2$ lies in the area of $R \in (P,Q)$) imply that determinant of Jacobi's matrix of the system (\ref{dynamic_model_LM_parameter}) in the point $E_2$ is negative. From this follows that $E_2$ is unstable saddle point. \\
\indent
Now, we consider two singular points. First singular point is qualitatively the same as the point $E_1$ or $E_3$ from previous situation with three singular point. This singular point lies in the area of the arc $A_1$ or $A_3$. So, these arcs are stable. \\
The second point describe levels of $Y$ and $R$ where the curve IS and the curve LM has the same slope. Using the Implicit Function Theorem for the neighbourhood of the point $E_2$, it holds $-\frac{I_Y - S_Y}{I_R - S_R}=-\frac{L_Y-M_Y}{L_R-M_R}$. This implies zero determinant of Jacobi's matrix of the system (\ref{dynamic_model_LM_parameter}) in this point. This leads to at least one zero eigenvalue. This singular point is surely unstable and lies in the area of the arc $A_2$. So, from this follows that $A_2$ is unstable also in this situation of two singular points. \\
\indent
If there originates only one singular point, this singular point will be qualitatively the same as the point $E_1$ or $E_3$ from situation with three singular point. This singular point lies in the area of the arc $A_1$ or $A_3$. So, these arcs are stable.
\end{proof}

\begin{thm} (On existence of relaxation oscillations on money market) \\
Let us consider the new IS-LM model with very slow changes of variable $Y$ in time (\ref{dynamic_model_LM_parameter}) (including $\epsilon \rightarrow 0$) with economic properties (\ref{economic_I}), (\ref{economic_S}), (\ref{economic_I_S}) and three phases money demand and money supply. Then there exist the counterclockwise relaxation oscillations.
\end{thm}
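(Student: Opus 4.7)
The plan is to imitate the argument of the goods-market theorem with the roles of the fast and slow variables swapped. Since $\epsilon \to 0$, I would first pass from the full system (\ref{dynamic_model_LM_parameter}) to the limit system (\ref{dynamic_model_LM_parameter_2}), in which $Y$ appears only as a parameter in the fast equation for $R$. Applying the Implicit Function Theorem to the equation $L(Y, R - MP + \pi^e) - M(Y, R - MP + \pi^e) - M_S = 0$, together with the three phases money demand and money supply assumption (so that $L_R - M_R < 0$ on phases 1 and 3 by (\ref{economic_L}) and (\ref{economic_M_R}), but $L_R - M_R > 0$ on phase 2 by (\ref{unusual_economic_L_R}) and (\ref{unusual_economic_M_R})), I would recover the sigma-shaped LM isocline of Figure \ref{fig:curve_LM} with fold points at $Y = Y_1$ and $Y = Y_2$, now bent in the $R$-direction rather than in the $Y$-direction.

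Next I would observe that, because $dY/dt = 0$ in the limit system, every trajectory off the LM isocline moves purely in the vertical $R$-direction with speed $|\beta[L - M - M_S]|$, which becomes arbitrarily large off the isocline as $\epsilon \to 0$, while the motion along the isocline itself is finite. By Proposition \ref{prp:stable_unstable_arcs_2}, the arcs $A_1$ and $A_3$ attract the fast vertical flow and $A_2$ repels it. Reinstating the slow horizontal drift $dY/dt = \epsilon \alpha[I - S]$ confines the long-time dynamics to $A_1 \cup A_3$, along which the phase point drifts in $Y$ according to the sign of $I - S$ until it reaches a fold at $Y = Y_1$ or $Y = Y_2$; there the stable branch terminates and the trajectory executes a fast vertical jump to the opposite stable arc. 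Concatenating two such slow traverses with two fast jumps closes up into one period of a relaxation cycle.

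The main obstacle I anticipate is showing that the orientation is \emph{counterclockwise} rather than clockwise. This reduces to determining the sign of the slow drift $I - S$ on each of $A_1$ and $A_3$. Using (\ref{economic_I}) and (\ref{economic_S}) one sees that $I - S$ is positive strictly below the IS curve and negative strictly above it, while (\ref{economic_I_S}) makes the IS curve itself monotone decreasing. Reading this against Figure \ref{fig:curve_IS_LM_2}, the IS curve crosses LM through the middle unstable arc $A_2$, so the lower stable arc $A_1$ lies below IS (giving $dY/dt > 0$, a rightward drift) while the upper stable arc $A_3$ lies above IS (giving $dY/dt < 0$, a leftward drift). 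The resulting itinerary --- rightward along $A_1$, fast upward jump to $A_3$, leftward along $A_3$, fast downward jump back to $A_1$ --- traces a counterclockwise loop, opposite to the clockwise orientation of the IS-side oscillation precisely because the sigma is now bent in $R$ rather than in $Y$. This sign-and-orientation check is the one genuinely new point; the remainder is a direct transcription of the previous argument.
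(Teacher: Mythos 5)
Your proposal is correct and follows essentially the same route as the paper's proof: pass to the limit system (\ref{dynamic_model_LM_parameter_2}), obtain the three-branch LM isocline via the Implicit Function Theorem, invoke Proposition \ref{prp:stable_unstable_arcs_2} for the attracting arcs $A_1$, $A_3$ and the repelling arc $A_2$, and concatenate the slow drifts along the stable arcs with fast vertical jumps at the folds $Y_1$, $Y_2$. Your sign analysis of $I-S$ relative to the decreasing IS curve only makes the counterclockwise orientation more explicit than the paper's own wording (which attributes it to ``the form of the curve IS and stability or unstability of the relevant arcs''); the underlying argument is the same.
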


\begin{figure}[ht]
  \centering
  \includegraphics[height=5.9cm]{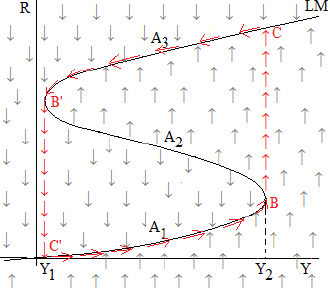}
  \caption{Relaxation oscillations in the model (\ref{dynamic_model_LM_parameter})}
  \label{fig:relaxation_oscillations_LM}
\end{figure}

\begin{proof}
If we have new IS-LM model with very slow changes of variable $Y$ in time (\ref{dynamic_model_LM_parameter}) considering $\epsilon \rightarrow 0$, then we can consider the system (\ref{dynamic_model_LM_parameter_2}) instead the system (\ref{dynamic_model_LM_parameter}).\\
\indent
Three phases money demand and money supply and the economic property (\ref{economic_M_Y}) of the money demand and money supply function ensure the form of the curve LM using the Implicit Function Theorem, see Figure \ref{fig:curve_LM}.\\
\indent
The trajectories of the system (\ref{dynamic_model_LM_parameter_2}) are directed almost vertically downwards or upwards (parallel to axis $R$) considering $\frac{d Y}{d t} = 0$, see the Figure \ref{fig:relaxation_oscillations_LM}. Up or down direction of the trajectories is given by the sign of the function $\alpha [I(Y,R)-S(Y,R)]$ on the curve LM. So, the direction of these trajectories is dependent on the stability or unstability of the arcs $A_1$, $A_2$ and $A_3$, see the Proposition \ref{prp:stable_unstable_arcs_2}. The trajectories are attracted to the stable arcs and are drove away the unstable arc. The speed of trajectories are finite near the isocline LM and nearness of the curve LM the trajectories go along the curve LM. The speed of trajectories are infinite large elsewhere.\\
\indent
For $Y<Y_1$, $Y>Y_2$ there exists only one stable stationary state and for $Y_1<Y<Y_2$ there exist three stationary states: two stable and one unstable between them. Every points of the curve LM are stationary points. $Y_1, Y_2$ are bifurcations values. If the parameter  $Y$ is changing very slowly, then the coordinates of stationary points are changed as displayed in the Figure \ref{fig:relaxation_oscillations_LM}.\\
\indent 
Now, we construct the cycle which is one vibration of the relaxation oscillations. We are changing the parameter $Y$ from the level $Y_1$ to $Y_2$. If the moving point is on or near the stable arc $A_1$, the moving point will go along this stable arc $A_1$, then it will pass the unstable arc $A_2$ from point $B$ to $C$, see Figure \ref{fig:relaxation_oscillations_LM}. The velocity between the point $B$ and $C$ is infinite large. There originates some "jump". There is the similar situation if we are changing the parameter $Y$ from the level $Y_2$ to $Y_1$. If the moving point is on or near the stable arc $A_3$, the moving point will go along the stable arc $A_3$ and then the moving point is attracted from point $B'$ to $C'$, see Figure \ref{fig:relaxation_oscillations_LM}. The sign of the function $\alpha[I(Y,R)-S(Y,R)]$ may change along the curve LM in such a way, that these trajectories form some path resembled limit cycle called relaxation oscillation. This oscillation contains the trajectories described by stable arcs $A_1$ and $A_3$ with finite velocity and the trajectories described by vertical segments (between points $B$ and $C$ and also between points $B'$ and $C'$) with infinite velocity (looking like "jump"), see the Figure \ref{fig:relaxation_oscillations_LM}. These trajectories form counterclockwise cycle because of the form of the curve IS and stability or unstability of the relevant arcs. Thus, we have just constructed the counterclockwise cycle which is one vibration of the the counterclockwise relaxation oscillations.
\end{proof}

We can display this oscillation using the isoclines IS and LM with shifting curve IS downwards and upwards, see the Figure \ref{fig:ralaxation_oscillations_IS-LM_2}.

\begin{figure}[ht]
  \centering
  \includegraphics[height=5cm]{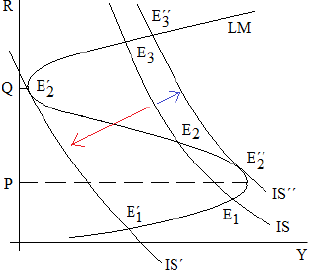}
  \caption{Shifts of the curve IS}
  \label{fig:ralaxation_oscillations_IS-LM_2}
\end{figure}

In this figure, we can see three singular points $E_1$, $E_2$ and $E_3$ (intersection points of IS and LM). $E_1$ and $E_3$ are stable singular points, $E_2$ is unstable saddle point. Then the curve IS shifts upwards or downwards according the changing of the parameter $Y$ until there are two singular points $E_2''$ and $E_3''$ (intersection points of IS'' and LM), or $E_1'$ and $E_2'$ (intersection points of IS' and LM). The point $E_1'$ and $E_3''$ is stable node or stable focus and the point $E_2'$ and $E_2''$ is unstable, see the proof of the Proposition \ref{prp:stable_unstable_arcs_2}.

\section*{Conclusion}
In this paper, we introduced new IS-LM model. This new model differs from the original model in elimination of its two main deficiencies. So, the new IS-LM model includes the modelling of inflation and the conjunction of exogenous and endogenous money supply in addition to the original model.\\
\indent
In this new IS-LM model, there can originate a special type of oscillations resembled limit cycle called relaxation oscillations. If we have new IS-LM with very slow changes of the interest rate in time in proportion to the aggregate income and with Kaldor's conditions, then there will exist relaxation oscillations on goods market. Likewise, if we have new IS-LM with very slow changes of the aggregate income in time in proportion to the interest rate and with three phases money demand and money supply, then there will exist relaxation oscillations on money market or financial assets market. We also described these oscillations by shifts of the curve LM or IS.

\section*{Discussion}
In these days, many experts and also the public more and more talk about unexpected fluctuation of different phenomenons in economics and about impact of these fluctuations on economics. In this paper, we show one point of view on this problems. We try to model some unexpected fluctuation of aggregate income on goods market and of interest rate on money or financial assets market using own new IS-LM model and theory of relaxation oscillations.\\
\indent
Relaxation oscillations on goods market cause quick change of the aggregate income which seems to be unexpected. Similarly, relaxation oscillations on money market (or financial assets market) cause quick change of the interest rate which seems to be unexpected. This quick "jumps" can ascribe unusual behaviour of economics.\\
\indent
The appropriate shifting of the curve IS or LM causes the relaxation oscillations. The fiscal policy can be demonstrated by the shifts of the curve IS downwards or upwards, vice versa the monetary policy can be demonstrated by the shifts of the curve LM downwards or upwards. This new IS-LM model with relaxation oscillations can estimate the impacts of the government intervention in this situation. If the interest rate is changed (monetary policy), then the curve LM will shift and the relaxation oscillations on goods market can originate (depending on the level of interest rate). Then, there will originate described quick jump of the aggregate income. If the aggregate income is changed (fiscal policy), then the curve IS will shift and the relaxation oscillations on money or financial assets market can originate (depending on the level of aggregate income). Then, there will originate described quick jump of the interest rate. These quick jumps of aggregate income on goods market, or of interest rate on money and financial assets market, can explain seemingly unexpected fluctuation of these quantities.\\
\indent
Today's, it seems to be necessary to model the ostensibly inexplicable phenomenons in economics.

\section*{Acknowledgements}
The research was supported, in part, by the Student Grant Competition of Silesian University in Opava, grant no. SGS/19/2010.

\end{document}